\newtheorem{theorem}{Theorem}[section]
\newtheorem{lemma}[theorem]{Lemma}
\numberwithin{equation}{section}
\newcommand{\set}[1]{\{ #1 \}} 
\newcommand{\setst}[2]{\{ #1 \mid #2 \}} 
\newcommand{\abs}[1]{| #1 |}
\newcommand{\val}[1]{\mathop{\rm val}\nolimits ( #1 )}
\providecommand{\R}{\mathbb{R}} \providecommand{\Z}{\mathbb{Z}}
\renewcommand{\phi}{\varphi}
\renewcommand{\hat}{\widehat}
\renewcommand{\bar}{\overline}
\renewcommand{\div}{\mathop{\rm div}\nolimits}
\newcommand{\calC}{\mathcal{C}}
\newcommand{\calN}{\mathcal{N}}
\newcommand{\calR}{\mathcal{R}}
\newcommand{\deltain}{\delta^{\rm in}}
\newcommand{\deltaout}{\delta^{\rm out}}
\newcommand{\qed}{\hfill$\square$}
\newenvironment{proof}{\noindent{\bf Proof}~}%
{\qed\medskip}
\newenvironment{numitem}{%
   \refstepcounter{equation}%
   \begin{enumerate} \compact%
      \item[(\theequation)]}{%
   \end{enumerate}
}
\newenvironment{nameditem}[1]{%
   \begin{description}%
      \rm \item[(#1)] \em%
}{%
   \end{description}%
}
\newcommand{\refeq}[1]{(\ref{eq:#1})} 
\newcommand{\refth}[1]{Theorem~\ref{th:#1}} 
\newcommand{\reflm}[1]{Lemma~\ref{lm:#1}} 
\newcommand{\refsec}[1]{Section~\ref{sec:#1}} 
\newcommand{\refsubsec}[1]{Subsection~\ref{subsec:#1}} 
\newcommand{\Xcomment}[1]{}
\renewcommand{\section}{\@startsection{section}{1}{0pt}%
{-3.5ex plus -1ex minus -.2ex}{2.3ex plus .2ex}%
{\normalfont\large\bf}} \makeatother
\renewcommand{\subsection}{\@startsection{subsection}{2}{0pt}%
{-3.0ex plus -1ex minus -.2ex}{1.5ex plus .2ex}%
{\normalfont\normalsize\bf}}
\begin{document}

\title{On Weighted Multicommodity Flows \\in Directed Networks}

\author{Maxim A.~Babenko\thanks{Moscow State University; Leninskie
Gory, 119991 Moscow, Russia; email: \texttt{maxim.babenko@gmail.com}\,. A part
of this research was done while this author was visiting Institut f\"ur
Diskrete Mathematik, Univ. Bonn.}
 \and
Alexander V.~Karzanov\thanks{Inst. for System Analysis of the RAS; 9, Prospect
60 Let Oktyabrya, 117312 Moscow, Russia; email: \texttt{sasha@cs.isa.ac.ru}.
Supported by RFBR grant 10-01-9311-CNRSL\_\,a.}
 }


\date{}

\maketitle
 \begin{abstract}
Let $G = (VG, AG)$ be a directed graph with a set $S \subseteq
VG$ of terminals and nonnegative integer arc capacities $c$. A feasible
\emph{multiflow} is a nonnegative real function $F(P)$ of ``flows'' on
paths~$P$ connecting distinct terminals such that the sum of flows through each
arc $a$ does not exceed $c(a)$. Given $\mu \colon S \times S \to \R_+$, the
\emph{$\mu$-value} of $F$ is $\sum_P F(P) \mu(s_P, t_P)$, where $s_P$ and $t_P$
are the start and end vertices of a path $P$, respectively.

Using a sophisticated topological approach, Hirai and Koichi showed that the
maximum $\mu$-value multiflow problem has an integer optimal solution when
$\mu$ is the distance generated by subtrees of a weighted directed tree and
$(G,S,c)$ satisfies certain Eulerian conditions.

We give a combinatorial proof of that result and devise a strongly polynomial
combinatorial algorithm. \medskip

\noindent\emph{Keywords}: directed multiflow, tree-induced distance, strongly
polynomial algorithm
\end{abstract}

\section{Introduction}

\subsection{Multiflows in directed networks}

We use standard terminology of graph and flow theory. For a digraph $G$, the
sets of its vertices and arcs are denoted by $VG$ and $AG$, respectively. A
similar notation is used for paths, cycles, and etc. For $X \in VG$, the set of
arcs of $G$ entering (resp. leaving) $X$ is denoted by $\deltain_G(X)$ (resp.
$\deltaout_G(X)$). When $X = \set{v}$, we write $\delta^\bullet_G(v)$ for
$\delta^\bullet_G(\{v\})$. When $G$ is clear from the context, it is omitted
from notation. Also for a set $A$ and a singleton $a$, we will write $A-a$ for
$A\setminus\{a\}$, and $A\cup a$ for $A\cup\{a\}$.

A \emph{directed network} is a triple $\calN = (G, S, c)$ consisting of a
digraph~$G$, a set $S \subseteq VG$ of \emph{terminals}, and \emph{integer} arc
\emph{capacities} $c \colon AG \to \Z_+$. Vertices in $VG - S$ are called
\emph{inner}. A directed path in $G$ is called an \emph{$S$-path} if its
endvertices are distinct elements of $S$. A \emph{multiflow} $F$ is a function
assigning a nonnegative real number, or \emph{flow}, to each $S$-path. A
multiflow $F$ is called \emph{feasible} if for each arc $a \in AG$, the sum of
flows assigned to $S$-paths going through $a$ does not exceed $c(a)$. The
(total) \emph{value} of $F$ is the sum of flows over all $S$-paths $P$:
 \begin{equation}
\label{eq:mf_value}
    \val{F} := \sum\nolimits_{P} F(P).
 \end{equation}
Sometimes (e.g., in~\cite{IKN}) such multiflows are called \emph{free} to
emphasize the fact that \emph{any} pair of terminals is allowed to be connected
by nonzero flows.

The following \emph{maximum (fractional) multiflow problem} is well known:
 \begin{nameditem}{\textbf{MF}}
Given a directed network $\calN = (G, S, c)$, find a feasible multiflow $F$ of
maximum value.
 \end{nameditem}
The problem in which one is asked for maximizing among the \emph{integer}
multiflows is denoted  by \textbf{IMF}.

For general directed networks $\calN$, problem~\textbf{IMF} is NP-hard already
for $\abs{S} = 2$ \cite{FHW}. Tractable cases have been revealed for networks
obeying a certain conservation property. More precisely, $c$ (or $\calN$) is
called \emph{Eulerian} at a vertex $v$ if $c(\deltain(v)) = c(\deltaout(v))$.
(For a function $f:A\to\R$ and a subset $A'\subseteq A$, we write $f(A')$ for
$\sum(f(a)\colon a\in A')$.) When $c$ is Eulerian at all inner vertices (resp.
at all vertices), the network $\calN$ is called \emph{inner} (resp.
\emph{totally}) \emph{Eulerian}. By a \emph{cut} in $G$ we mean a pair of
nonempty subsets $(X, \bar X)$, where $X \subset VG$ and $\bar X := VG - X$. It
is called an \emph{$(S_1,S_2)$-cut} if $S_1 \subseteq X$ and $S_2 \subseteq
\bar X$. When it is not confusing, we may refer to the arc sets
$\deltaout_G(X)$ and $\deltain_G(X)$ as cuts as well.

The following result signifies the importance of inner Eulerian networks:

\begin{theorem} [Lomonosov (unpublished, 1978), Frank \cite{frank}]
\label{th:free_min_max}
    Let $\calN=(G,S,c)$ be an inner Eulerian directed network.
    Then there exists an integer maximum feasible multiflow~$F$ in $\calN$. It satisfies
    $$
        \val{F} = \sum\nolimits_{t \in S} c(\deltaout(X_t)),
    $$
    where for each $t \in S$, ~$(X_t, \bar X_t)$ is a minimum capacity
    $(t, S-t)$-cut in~$\calN$.
\end{theorem}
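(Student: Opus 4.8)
~The plan is to establish the two matching bounds $\val{F}\le\sum_{t\in S}c(\deltaout(X_t))$ for \emph{every} feasible multiflow $F$, and $\val{F^\ast}=\sum_{t\in S}c(\deltaout(X_t))$ for a specifically constructed \emph{integer} multiflow $F^\ast$; together they prove the theorem. Write $f(a):=\sum_{P\ni a}F(P)$ for the total flow of $F$ through an arc $a$, so feasibility means $f(a)\le c(a)$. For the upper bound, fix $s\in S$. Every flow path $P$ of $F$ starting at $s$ ends in $S-s\subseteq\bar X_s$, hence uses at least one arc of $\deltaout(X_s)$; therefore the total flow of $F$ carried by paths starting at $s$ is at most $\sum_{a\in\deltaout(X_s)}f(a)\le c(\deltaout(X_s))$. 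Since each flow path has a unique start vertex, summing over $s\in S$ gives $\val{F}\le\sum_{t\in S}c(\deltaout(X_t))$.

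For the reverse direction I would reduce to the case $VG=S$ by eliminating inner vertices one at a time. While $\calN$ has an inner vertex $v$, perform a complete splitting-off at $v$: repeatedly choose $e_1=(u_1,v)\in\deltain(v)$, $e_2=(v,u_2)\in\deltaout(v)$ and an amount $\gamma\le\min\set{c(e_1),c(e_2)}$, decrease $c(e_1)$ and $c(e_2)$ by $\gamma$, and add $\gamma$ to the capacity of the arc $(u_1,u_2)$ (created if not present, and discarded if it is a loop, so the graph stays loopless), until $v$ is isolated and can be deleted. Since $c$ is Eulerian at $v$, the splitting-off theorem for Eulerian digraphs guarantees these operations can be chosen so that the local arc-connectivity $\lambda(x,y)$ is preserved for every pair of surviving vertices; moreover the construction visibly keeps $c$ Eulerian at every vertex other than $v$. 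Each quantity $c(\deltaout(X_t))$ is also preserved: it is the minimum capacity of a cut separating $t$ from $S-t$, so it equals $\lambda(t,t^\ast)$ in the network obtained by adjoining a new vertex $t^\ast$ reachable from each vertex of $S-t$ along arcs of huge capacity, and $\lambda(t,t^\ast)$ is among the preserved values. Iterating over all inner vertices produces an inner Eulerian network $\calN'=(G',S,c')$ with $VG'=S$ and $c'(\deltaout(X'_t))=c(\deltaout(X_t))$ for all $t$, where $(X'_t,\bar X'_t)$ is a minimum $(t,S-t)$-cut of $\calN'$.

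In $\calN'$ every arc joins two distinct terminals and is thus an $S$-path by itself, so routing $c'(a)$ units of flow along each one-arc path is a feasible integer multiflow of value $c'(AG')$. No multiflow of $\calN'$ beats this, since a flow path of length $\ell\ge1$ carrying $\varphi$ units adds $\varphi$ to the value while consuming $\varphi$ on each of its $\ell$ arcs, so every multiflow has value at most $c'(AG')$. As $VG'=S$ forces $X'_t=\set{t}$, we also have $\sum_{t\in S}c'(\deltaout(X'_t))=\sum_{t\in S}c'(\deltaout(t))=c'(AG')$ because each arc leaves exactly one vertex; hence the theorem holds for $\calN'$. Finally I would lift the optimal integer multiflow of $\calN'$ back to $\calN$ one flow path at a time: each arc of $G'$ represents a walk through the deleted inner vertices, so an $S$-path of $G'$ expands into a walk of $G$ between the same two (distinct) terminals, and deleting its cycles yields an $S$-path of $G$ to which the corresponding amount of flow is added. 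The total value is unchanged, since flow paths are only shortened and never destroyed, and integrality is preserved; feasibility in $\calN$ holds because, by the choice of the split-off amounts, the total capacity routed through any original arc never exceeds its capacity, and restricting to a sub-walk only lowers arc usage. The resulting integer multiflow of $\calN$ has value $\sum_{t\in S}c(\deltaout(X_t))$, which by the upper bound is maximum.

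The only non-routine step is the splitting-off: it relies on the directed Eulerian splitting-off theorem, and one must verify that it preserves the \emph{set}-separating cut values $c(\deltaout(X_t))$, not merely pairwise arc-connectivities --- the auxiliary-terminal trick above is exactly what closes that gap. The remaining ingredients (the cut-counting upper bound, the trivial all-terminal optimum, and the path-by-path lifting) are straightforward.
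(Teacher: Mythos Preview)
The paper does not prove this theorem; it is quoted as a known result of Lomonosov and Frank (with the citation to~\cite{frank}) and thereafter used as a black box. So there is no in-paper argument to compare your attempt against.

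Your sketch is essentially Frank's own approach via directed splitting-off, and the overall strategy is sound. One step needs tightening. You invoke the directed Eulerian splitting-off theorem at an inner vertex $v$ of $G$, which guarantees a complete splitting preserving $\lambda_G(x,y)$ for all $x,y\in VG-v$; you then assert that $\lambda(t,t^\ast)$, computed in the \emph{augmented} network $G+t^\ast$, is ``among the preserved values''. It is not, a priori: the theorem was applied in $G$, not in $G+t^\ast$, and moreover you need a \emph{single} splitting that simultaneously works for every $t\in S$, whereas your argument produces a possibly different good splitting for each $t$. The standard repair is to adjoin all the auxiliary sinks at once---for each $t\in S$ a vertex $t^\ast$ with high-capacity arcs from every $s\in S-t$ to $t^\ast$---and apply the splitting-off theorem in this larger graph. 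The vertex $v$ remains Eulerian and is incident to none of the auxiliary vertices, so the chosen splitting involves only arcs of $G$, and now each $\lambda(t,t^\ast)$ is genuinely among the preserved pairwise connectivities. With this adjustment the argument goes through. The upper bound, the base case $VG'=S$, and the lifting are fine; for the lifting it is cleanest to keep parallel split-off arcs distinct rather than merging them, so that every arc of $G'$ records a single walk in $G$ and feasibility of the lifted flow is immediate.
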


Therefore, \textbf{MF} and \textbf{IMF} have the same optimal value for an
inner Eulerian network, and this value can be found in strongly polynomial time
(by computing a minimum $(t, S-t)$-cut for each $t\in S$). Ibaraki, Karzanov,
and Nagamochi~\cite{IKN} devised a ``divide-and-conquer'' method that computes
an integer maximum multiflow in such a network in $O((MF(n,m) + mn) \cdot \log
\abs{S} + mn^2)$ time. (Hereinafter $n := \abs{VG}$, $m := \abs{AG}$, and
$MF(n', m')$ denotes the complexity of a max-flow computation in a directed
network with $n'$ vertices and $m'$ arcs.) The latter complexity was improved
to $O((MF(n,m) + mn \log (n^2/m)) \cdot \log \abs{S})$ in~\cite{BK-07}.

\subsection{Weighted multiflows}

A generalization of \textbf{MF} involves weights between terminals. More
precisely, given a \emph{weighting} $\mu \colon S \times S \to \R_+$, the
\emph{$\mu$-value} of a multiflow $F$ is
\begin{equation}
\label{eq:mf_mu_value}
    \val{F,\mu} := \sum\nolimits_P \mu(s_P, t_P) F(P),
\end{equation}
where the sum is over all $S$-paths $P$, and $s_P$ and $t_P$ denote the start
and end vertices of~$P$, respectively. We may assume that $\mu(s,s) = 0$ for
all $s \in S$.

Replacing \refeq{mf_value} by \refeq{mf_mu_value}, we obtain the weighted
counterpart of \textbf{MF}:
  \begin{nameditem}{$\mu$-\textbf{MF}}
    Given $\calN$ and $\mu$ as above, find a feasible multiflow $F$ of maximum $\mu$-value.
  \end{nameditem}
The integer strengthening of $\mu$-\textbf{MF} is denoted by
$\mu$-\textbf{IMF}. When $\mu(s,t)=1$ for all $s\ne t$, ~$\mu$-\textbf{MF}
turns into \textbf{MF}, and $\mu$-\textbf{IMF} into \textbf{IMF}.

\subsection{Tree-induced weights} \label{subsec:tree_induced}

It has been shown that problem $\mu$-\textbf{IMF} has a rather wide spectrum of
tractable cases. The simplest case is $S = \set{s, t}$, ~$\mu(s,t) = 1$ and
$\mu(t,s) = 0$; then $\mu$-\textbf{IMF} becomes the standard maximum flow
problem with arbitrary integer capacities. A representative well-solvable class
has been found in connection with the so-called \emph{directed multiflow
locking problem}:
    \begin{nameditem}{\textbf{DMLP}}
Given a directed network $\calN=(G,S,c)$ and a collection $\calC\subseteq 2^S$,
find a feasible multiflow $F$ in $\calN$ that locks simultaneously all members
of $\calC$.
  \end{nameditem}
Here $F$ is said to \emph{lock} a subset $A\subset S$ if the sum of values
$F(P)$ over the $S$-paths $P$ going from $A$ to $S-A$ is maximum possible, i.e.
it is equal to the minimum capacity of an $(A,S-A)$-cut in $\calN$. A
collection $\calC\subseteq 2^S$ is called \emph{lockable} if \textbf{DMLP} has
a solution for all $(G,c)$ (with $S$ fixed). Important facts are given in the
following

\begin{theorem}[\cite{IKN}]
  \label{th:lock}
$\calC\subseteq 2^S$ is lockable if and only if $\calC$ is \emph{cross-free},
i.e. for any $A,B\in\calC$, at least one of the following holds: $A\subseteq
B$, $B\subseteq A$, $A\cap B=\emptyset$, $A\cup B=S$. Moreover, if $\calC$ is
cross-free and $\calN$ is inner Eulerian, then \textbf{DMLP} has an integer
solution. Such a solution can be found in $O((MF(n,m) + mn) \cdot \log \abs{S}+
mn^2)$ time.
   \end{theorem}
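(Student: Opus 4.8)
The plan is to prove both directions of the biconditional and then the refinements (integer solution under inner Eulerian conditions, running time) as follows.

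\medskip\noindent\textbf{(Necessity: lockable $\Rightarrow$ cross-free.)}
I would argue by contraposition. Suppose $\calC$ contains two crossing sets $A$ and $B$, so that all four sets $A\setminus B$, $B\setminus A$, $A\cap B$, $S\setminus(A\cup B)$ are nonempty. The idea is to exhibit a concrete network $(G,c)$ in which no single feasible multiflow can simultaneously lock $A$ and $B$. A natural candidate is a small gadget: pick one terminal from each of the four ``quadrants'', connect them through a common inner structure (for instance a single inner vertex, or a directed $4$-cycle) with unit capacities arranged so that the min cut for $(A,S\setminus A)$ and the min cut for $(B,S\setminus B)$ ``compete'' for the same arcs. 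One then computes the two minimum cut values and shows by a counting/flow-conservation argument that a multiflow achieving one bound must strictly undershoot the other. The main care here is to make the gadget work for \emph{directed} cuts, where $\deltaout$ and $\deltain$ are asymmetric; I expect to orient the cycle so that crossing $A$ and crossing $B$ force flow in opposing directions along shared arcs.

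\medskip\noindent\textbf{(Sufficiency: cross-free $\Rightarrow$ lockable, with integrality.)}
This is the substantial half. A cross-free family $\calC$ over $S$ can be represented by a tree: there is a tree $T$ with a map sending each $s\in S$ to a node of $T$ such that every $A\in\calC$ corresponds to the set of terminals lying on one side of some edge of $T$ (after possibly complementing, using $A\cup B=S$ as the fourth crossing condition). This means locking all of $\calC$ is equivalent to finding a multiflow that simultaneously locks all the ``edge cuts'' of this laminar/tree structure. I would then proceed by induction on $|\calC|$ (equivalently on the number of edges of $T$), peeling off one set $A\in\calC$ that is innermost (minimal under inclusion, or corresponds to a leaf edge of $T$). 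The inductive step is: first route a multiflow $F_0$ across the cut $(A, S\setminus A)$ of maximum value $=c(\deltaout(X))$ for a fixed minimum cut $X$ (a single max-flow computation, integral since capacities are integral and, on the residual inner-Eulerian structure, the leftover satisfies the Euler condition so \refth{free_min_max}/standard integral max-flow applies); then contract $A$'s side to a single new terminal and recurse on the smaller network with the family $\calC$ restricted accordingly, which is still cross-free. Finally one has to argue that splicing the recursively obtained multiflow with $F_0$ does not destroy the already-achieved locking of $A$ nor violate capacities — this is where the cross-free (non-crossing) hypothesis is used decisively, since a set $B$ that does not cross $A$ either sits inside $X$, contains $X$, is disjoint from $X$, or unions to $S$ with it, and in each case its min cut either lives entirely in the contracted part or is untouched by the contraction.

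\medskip\noindent\textbf{(Integrality and complexity.)}
Integrality follows by maintaining the inner Eulerian property through each contraction (contracting a set along a minimum cut with integer capacities preserves Eulerianness at the new inner/terminal vertices by a parity/conservation check) and invoking \refth{free_min_max} or integral max-flow at each stage, then noting that the splice of integral pieces is integral. For the running time, the recursion has depth $O(|\calC|)\le O(2^{|S|})$ a priori, but because $\calC$ cross-free has size $O(|S|)$ after removing redundant members, there are $O(|S|)$ stages; organizing the contractions along the tree $T$ in a balanced (divide-and-conquer) manner — split $T$ at a centroid edge rather than a leaf edge — reduces the recursion depth to $O(\log|S|)$, giving $O((MF(n,m)+mn)\cdot\log|S| + mn^2)$, matching the stated bound (the $mn^2$ term absorbing the path-decomposition/flow-bookkeeping cost).

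\medskip\noindent\textbf{(Anticipated main obstacle.)}
The delicate point is the splicing step in the sufficiency direction: after contracting the $X$-side and solving recursively, the returned multiflow uses $S$-paths through the contracted super-terminal, and these must be ``expanded'' back into genuine $S$-paths through the original subnetwork sitting inside $X$ without (a) exceeding capacities on the arcs already carrying $F_0$, and (b) disturbing the value of flow across the cut $(A,S\setminus A)$. Making this precise requires a careful conservation argument inside $X$ — essentially, the expanded paths must form a flow inside $X$ that is feasible against the residual capacities left by $F_0$, and the inner Eulerian condition on $X$ is exactly what guarantees such an integral expansion exists. I would handle this by treating the inside of $X$ as its own inner Eulerian network with a modified terminal set and reducing to \refth{free_min_max} once more.
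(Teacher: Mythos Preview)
The paper does not contain a proof of this theorem: it is quoted from~\cite{IKN} as background, and the present paper's own contribution is an algorithmic proof of \refth{good_mu}(ii), not of \refth{lock}. So there is no ``paper's own proof'' to compare your proposal against.

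That said, your outline is broadly in the spirit of the original source. The tree representation of a cross-free family is exactly~\refeq{lam-tree} in the present paper, and the divide-and-conquer scheme you sketch---split along a centroid edge of the representing tree, solve two contracted subinstances, then aggregate---is precisely the \cite{IKN} method that the present paper extends in \refsubsec{partition}. Your identification of the splicing/aggregation step as the delicate point is accurate; in \cite{IKN} (and echoed in \refsubsec{partition} here) this is handled not by your proposed ``residual inner Eulerian'' argument inside $X$, but more directly: one shows that the two subproblem solutions each saturate the boundary $\deltain(z),\deltaout(z)$ of the contracted super-terminal (this follows from the minimality of the chosen cut, cf.~\refeq{rec_sat}), and then the paths through $z$ on both sides can be concatenated arc-by-arc because the total flow on each boundary arc matches on the two sides. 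This is simpler than reinvoking \refth{free_min_max} inside $X$. For the necessity direction, your gadget idea is the standard one, but be aware that in the \emph{directed} setting the crossing obstruction is more subtle than in the undirected case (where a $4$-cycle with unit capacities suffices); the construction in~\cite{IKN} is the reference to consult.
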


This gives rise to the following tractable cases of $\mu$-\textbf{IMF}. Given
$\calC\subseteq 2^S$, take an arbitrary function $\ell:\calC\to\R_+$. For
$s,t\in S$, define
  \begin{equation} \label{eq:mu_ell}
\mu_\ell(s,t):=\sum(\ell(A)\colon A\in\calC, ~s\in A\not\ni t).
  \end{equation}
Suppose that $\calC$ is cross-free and $\calN$ is inner Eulerian, and let $F$
be an integer solution to \textbf{DMLP} (existing by \refth{lock}). Then $F$ is
simultaneously an optimal solution to $\mu_\ell$-\textbf{IMF} for every
$\ell:\calC\to\R_+$; this can be easily concluded from the fact that $F$
saturates minimum capacity $(A,S-A)$-cuts in $\calN$ for all $A\in\calC$.

A cross-free collection $\calC$ can be represented by use of a directed tree
$T=(VT,AT)$ (a digraph whose underlying undirected graph is a tree); namely:
  \begin{numitem} \label{eq:lam-tree}
there is a bijection $\beta:\calC\to AT$ and a map $\gamma:S\to VT$ such that
for each arc $a=(u,v)\in AT$ and for $A:=\beta^{-1}(a)$, the set of terminals
$s\in S$ whose image $\gamma(s)$ occurs in the component of $T-a$ containing
$u$ is exactly $A$.
  \end{numitem}
Under this correspondence, we may interpret $\ell:\calC\to\R_+$ as a
\emph{length} function on the arcs of $T$, keeping the same notation:
$\ell(a):=\ell(A)$ for $a\in AT$ and $A:=\beta^{-1}(a)$. These arc lengths
induce \emph{distances} $d=d_\ell$ on $VT$ in a natural way:
  \begin{numitem} \label{eq:distT}
for $x, y \in VT$, define $d(x,y)$ to be the sum of $\ell$-lengths of
\emph{forward} arcs in the simple path from $x$ to $y$ in $T$.
  \end{numitem}
(This path may contain both forward and backward arcs. If there are no forward
arcs, we have $d(x,y)= 0$.) One can see that $\mu=\mu_\ell$ figured
in~\refeq{mu_ell} satisfy
  \begin{equation} \label{eq:metr-mu}
  \mu(s,t)=d_\ell(\gamma(s),\gamma(t))\qquad s,t\in S.
  \end{equation}

Note that such a $\mu$ satisfies the triangle inequalities
$\mu(s,t)+\mu(t,u)\ge \mu(s,u)$ for all $s,t,u\in S$, i.e. $\mu$ is a
\emph{directed metric (space)}. In light of~\refeq{distT}--\refeq{metr-mu},
$\mu$ is called a \emph{tree-induced} directed metric. \smallskip

Generalizing the above-mentioned integrality results, Hirai and
Koichi~\cite{HK} considered arbitrary weight (or \emph{distance}) functions
$\mu:S\times S\to \R_+$ and gave an exhaustive analysis of the integrality and
``unbounded fractionality'' behavior of problem $\mu$-\textbf{MF} in terms of
$\mu$, for both integer and Eulerian cases.

More precisely, let $T$ be a directed tree with nonnegative arc lengths $\ell$.
Given a network $\calN$ as before, suppose that each terminal $s \in S$ is
associated with some \emph{subtree} (a weakly connected subgraph) $T_s$ of~$T$.
For $s, t \in S$, define $\mu(s,t)$ to be the distance from $T_s$ to $T_t$,
i.e.
$$
    \mu(s,t) := \min \{d(u,v) \colon u \in T_s,\, v \in T_t\}.
$$
(Note that such a $\mu$ need not satisfy triangle inequalities. When each $T_s$
is a single vertex, $\mu$ is specified as in~\refeq{metr-mu}.)

A weight function $\mu$ on $S\times S$ that can be obtained in this way is
called a \emph{tree-induced (directed) distance}, and an appropriate tuple
$\calR = (T, \ell, \set{T_s})$ is called a \emph{(tree) realization} of $\mu$.

For such an $\calR$, we distinguish between three sorts of terminals. We call
$s\in S$ \emph{simple} if $T_s$ consists of a single vertex, \emph{linear} if
$T_s$ is a directed path, and \emph{complex} otherwise. If the whole tree $T$
forms a directed path, then $\calR$ itself is called \emph{linear}.

Hirai and Koichi obtained the following results.
  \begin{theorem}[\cite{HK}]
   \label{th:good_mu}
Let $\calN = (G, S, c)$ be a directed network and let $\mu \colon S \times S \to \R_+$
be a tree-induced distance with a realization $\calR$.
    \begin{enumerate}
\item[\rm (i)] If $\calR$ is linear, then
$\mu$-\textbf{MF} has an integer optimal solution.
\item[\rm (ii)] If $c$ is Eulerian at all inner vertices and all
complex terminals, then $\mu$-\textbf{MF} has an integer optimal solution.
    \end{enumerate}
\end{theorem}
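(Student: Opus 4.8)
The plan is to reduce the general Eulerian case of \refth{good_mu}(ii) to the linear case (i) by a sequence of structural transformations, using \refth{free_min_max} and \refth{lock} as the combinatorial engine. First I would set up an LP-duality framework for $\mu$-\textbf{MF}: the dual asks for a nonnegative ``potential'' or length function on the arcs $AG$ together with a choice, for each $S$-path type $(s,t)$, of a shortest dual path whose length is at least $\mu(s,t)$. The guiding principle is that an integer optimum exists precisely when one can find a dual optimal solution of a combinatorial ``cut-like'' shape, together with a primal solution routed along those cuts; this is the mechanism behind \refth{free_min_max} and the locking theorem \refth{lock}, and the whole point is to propagate it through the tree realization $\calR$.

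Next I would handle the simple-terminal case first, i.e.\ when every $T_s$ is a single vertex and $\mu$ is a genuine tree-induced directed metric as in \refeq{distT}--\refeq{metr-mu}. Here the cross-free collection $\calC$ associated with $T$ via \refeq{lam-tree} is exactly the right object: since $\calN$ is inner Eulerian, \refth{lock} gives an integer solution $F$ to \textbf{DMLP} for $\calC$, and, as already noted in the text following \refeq{mu_ell}, such an $F$ is simultaneously optimal for $\mu_\ell$-\textbf{IMF}. So for metrics coming from single-vertex subtrees there is nothing to do beyond invoking the locking theorem. The real work is to leverage (i) and this metric case to deal with \emph{linear} terminals, whose subtrees $T_s$ are directed paths, under the weaker Eulerian hypothesis that only inner vertices and \emph{complex} terminals are balanced.

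The key step is a splitting/expansion construction at each linear (and each simple) terminal. For a linear terminal $s$ with $T_s$ a directed path $v_0 \to v_1 \to \cdots \to v_k$, I would replace $s$ in $\calN$ by a gadget that allows flow to ``enter'' the tree at any vertex of $T_s$ at no cost; concretely, split $s$ into auxiliary terminals $s_0,\dots,s_k$ sitting at $v_0,\dots,v_k$, wire them together with zero-length internal arcs of infinite capacity so that the distance structure on the enlarged terminal set becomes a tree-induced \emph{metric} (single-vertex subtrees) with the same $\mu$ on the original pairs, and split the arcs at $s$ to preserve the Eulerian balance on the new construction. Because $s$ was not required to be balanced, the splitting must be done so that the gadget itself absorbs the imbalance; this is possible exactly when $T_s$ is a directed path (one can route the imbalance monotonically along the path), which is why linear terminals are allowed but complex ones are not. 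After this reduction the network is inner Eulerian and $\mu$ is a tree-induced metric, so the simple-terminal case applies and yields an integer optimum; one then checks that the gadget flow projects back to an integer optimal multiflow in the original $\calN$, using that the gadget arcs have integer (indeed infinite) capacity and zero length so they contribute nothing to the $\mu$-value and can be contracted.

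The main obstacle I anticipate is precisely the correctness of the splitting at linear terminals: one must show (a) that the imbalance $c(\deltain(s)) - c(\deltaout(s))$ can be routed through the path gadget $T_s$ while keeping all capacities integer and all new inner vertices Eulerian, (b) that no dual (cut) value is lost — i.e.\ the optimum of $\mu$-\textbf{MF} is unchanged by the expansion, which requires a compatibility argument between minimum $(A,S-A)$-cuts in the old and new networks, and (c) that the contraction of gadget arcs sends an integer solution to an integer solution without creating flow on forbidden path types. Establishing (b) cleanly will likely require an explicit optimal dual solution built from the tree $T$ (a ``tree-shaped'' potential), so that one can verify complementary slackness on both sides simultaneously; packaging this as a single combinatorial certificate, and making the resulting algorithm strongly polynomial by bounding the gadget size by $O(|VT|)$ per terminal and reusing the locking algorithm of \refth{lock}, is the technical heart of the argument.
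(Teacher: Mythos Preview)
Your reduction of linear terminals to simple ones is essentially what the paper does in its initial reductions (Section~\ref{ssec:reduct}), although the paper's version is simpler: since one direction of the path $T_s$ has zero $\ell$-length, $\mu(x,s)=d(T_x,t_1)$ and $\mu(s,x)=d(t_2,T_x)$ depend only on the two endpoints $t_1,t_2$ of $T_s$, so two new simple terminals and two arcs suffice---there is no need for a chain $s_0,\ldots,s_k$ or for ``absorbing imbalance along the path.''

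The genuine gap is in the treatment of \emph{complex} terminals. After your splitting of linear (and simple) terminals, every complex terminal~$s$ still has a non-singleton subtree~$T_s$, so the resulting $\mu'$ is \emph{not} a tree-induced metric and \refth{lock} does not apply. Your sentence ``which is why linear terminals are allowed but complex ones are not'' correctly identifies why the hypothesis demands balance at complex terminals, but it does not say what to do with them once they are balanced. A naive gadget---attaching to $s$ a fan of simple terminals $s_v$, one per vertex $v\in VT_s$, with infinite-capacity arcs---does not preserve the optimum: a path from $x$ through $s$ to $s_v$ then earns $d(T_x,v)$, which can strictly exceed $\mu(x,s)=\min_w d(T_x,w)$, so the reduced problem has a larger optimal value and its integer optimum need not project back to an optimum of the original. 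Making such a gadget value-preserving (if possible at all) would require encoding the internal metric of $T_s$ inside $G$, and you give no construction or argument for this.

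The paper avoids this obstacle entirely. It never reduces complex terminals to simple ones; instead it sets up a cut-based optimality certificate (\reflm{opt_cuts}: a family of saturated $\Pi_a$-separating cuts, one per arc of~$T$) and builds such a certificate by a divide-and-conquer recursion on~$T$ (\refsubsec{partition}). The recursion bottoms out at trees with two or three leaves; the three-leaf base case (\refsubsec{three_leaves}) is exactly where complex terminals are handled, and it requires a nontrivial repair step---computing auxiliary max-flows inside each cut side to shrink $X_i$ to a $\Pi_{a_i}$-separating $X_i'$ while preserving saturation. This step, not a gadget reduction to the locking theorem, is the combinatorial heart of the proof of~(ii).
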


Note that the proof of (i) in \refth{good_mu} given in~\cite{HK} is
constructive; it reduces problem $\mu$-\textbf{MF} in this case to finding a
certain min-cost circulation. The proof in~\cite{HK} is much more involved; it
employs a topological approach based on a concept of tight spans of directed
distance spaces introduced in that paper. (Another nice result in~\cite{HK}
relying on the directed tight span approach asserts that if a distance $\mu$ is
not tree-induced, then $\mu$-\textbf{MF} has unbounded fractionality in the
totally Eulerian case, i.e. there is no positive integer $k$ such that
$\mu$-\textbf{MF} admits a $\frac1k$-integer solution for every totally
Eulerian network and this $\mu$.)

  \medskip
In this paper we devise an efficient combinatorial algorithm that constructs an
integer optimal solution to problem $\mu$-\textbf{MF} under the conditions as
in~(ii) of \refth{good_mu}; this yields an alternative (and relatively simple)
proof of assertion~(ii). Our method extends the divide-and-conquer approach
of~\cite{IKN}; it is described in \refsec{algo}. The algorithm runs in
$O((MF(n,m) + mn \log (n^2/m)) \cdot \log \abs{S})$ time.

\section{Algorithm}
\label{sec:algo}

Let $\calN = (G, S, c)$ and $\mu$ obey the conditions in \refth{good_mu}(ii).

The following convention will allow us to slightly simplify the description of
our algorithm (without loss of generality). In a tree realization $\calR = (T,
\ell, \set{T_s})$ of a distance function $\mu$, let us think of $T$ as an
\emph{undirected} tree with \emph{edge} set $ET$, and assume that each edge
$e=uv$ generates two oppositely directed arcs: one going from $u$ to $v$, and
the other from $v$ to $u$ (yielding a ``directed quasi-tree''). For $a=(u,v)$,
the opposite arc $(v,u)$ is denoted by $\bar a$. The length function $\ell$ is
given on the corresponding arc set, denoted by $AT$ as before, and for $x,y\in
VT$, the distance $d(x,y)$ is defined to be the $\ell$-length of the
corresponding \emph{directed} path from $x$ to $y$. Accordingly, a terminal $s$
is linear if the subtree $T_s$ is a simple undirected path and one of the two
directed paths behind $T_s$ has zero $\ell$-length.

Sometimes, to ensure the desired efficiency of the method, we will be forced to
treat some linear terminals as complex ones (which will never be confusing).

\subsection{Initial reductions} \label{ssec:reduct}

Let $\calR = (T, \ell, \set{T_s})$ be a tree realization of $\mu$. A
\emph{pre-processing stage} of the algorithm applies certain reductions to
$\calR$ (called \emph{initial reductions}).

Choose a linear terminal $s \in S$ (if exists), i.e. $T_s$ is a path in $T$
connecting some vertices $t_1$ and $t_2$, and one of the two directed paths
behind $T_s$, from $t_2$ to $t_1$ say, has zero $\ell$-length. This implies
that
  \begin{equation}
\label{eq:tree_path_dist}
    \mu(x, s) = d(T_x, t_1) \quad\mbox{and}\quad \mu(s, x) = d(t_2, T_x)
    \qquad \mbox{for all $x \in S$}.
\end{equation}

We replace $s$ by a pair of simple terminals as follows. Add to $G$ new
terminals $s_1$ and $s_2$ and arcs $(s, s_1)$ and $(s_2, s)$. The capacities of
these arcs are chosen to be sufficiently large and to make the network Eulerian
at $s$. Denote the resulting digraph by~$G'$ and the network by $\calN' :=
(G',S', c')$, where $S' := (S - s) \cup \set{s_1, s_2}$. We modify $\calR$ into
the tuple $\calR'$ with the same tree~$T$ by setting $T_{s_1} := \set{t_1}$ and
$T_{s_2} := \set{t_2}$. This gives new distance $\mu' \colon S' \times S' \to
\R_+$.

We claim that the two problems: $\mu$-\textbf{MF} with $(\calN, \calR)$ and
$\mu'$-\textbf{MF} with $(\calN', \calR')$, are essentially equivalent. Indeed,
$\mu'(s_2, s_1) = 0$ (since the $\ell$-length of the directed path from $t_2$
to $t_1$ in $T$ is zero). Therefore, one may consider only those multiflows in
$\calN'$ that are zero on all $s_2$--$s_1$ paths. Any other $S'$-path $P'$ in
$\calN'$ has a natural image (an $S$-path) $P$ in $\calN$. Namely, if $P'$
neither starts at $s_2$ nor ends at $s_1$, then $P = P'$. If $P'$ starts at
$s_2$ (resp. ends at $s_1$), then $P$ is its maximal subpath from $s$ (resp. to
$s$). This gives a one-to-one correspondence between the $S'$-paths $P'$ in
$\calN'$ (excluding $s_2$--$s_1$ ones) and the $S$-paths $P$ in $\calN$, and by
\refeq{tree_path_dist}, the transformation preserves distances: $\mu'(s_{P'},
t_{P'}) = \mu(s_P, t_P)$. We reset $\calN := \calN'$ and $\calR := \calR'$.

Making a sequence of similar reductions, we obtain a situation when
   \begin{enumerate}
    \item[(C1)]
Any terminal in $\calN$ is either simple or complex.
  \end{enumerate}
Four more sorts of reductions are applied to ensure the following additional
properties:
\begin{enumerate}
  \item[(C2)]
Each leaf in $T$ corresponds to some (possibly multiple) simple terminal. (For
otherwise the leaf can be removed from $T$.)
 \item[(C3)]
No inner vertex $v$ of $T$ corresponds to a simple terminal. (For otherwise one
can add to $T$ a new vertex $v'$ and edge $vv'$ with zero $\ell$-length of both
arcs $(v,v'),(v',v)$, and replace the subtree $\{v\}$ by $\{v'\}$ in the
realization of $\mu$.)
 \item[(C4)]
Each inner vertex of $T$ has degree at most 3. (This can be achieved by
splitting inner vertices of bigger degrees in $T$ and by adding additional
edges with zero $\ell$-length of arcs in both directions.)
 \item[(C5)] $T$ has $O(|S|^2)$ vertices.
\end{enumerate}

\noindent To provide~(C5), note that $T$ has $O(|S|)$ leaves (by~(C2)), and
hence it has $O(|S|)$ vertices of degree 3. Consider a vertex $v$ of degree 2
in $T$, and let $e=uv$ and $e'=vw$ be its incident edges. If for any $s\in S$,
the subtree $T_s$ contains either none or both of $e,e'$, then we can merge
$e,e'$ into one edge $uw$ (adding up the corresponding arc lengths), obtaining
a realization of $\mu$ with a smaller tree size. Otherwise $v$ is a leaf of
some $T_s$. Obviously, the number of leaves of $T_s$ does not exceed that of
$T$, so it is estimated as $O(|S|)$. This gives~(C5).

\subsection{Optimality certificate}

Here we establish a sufficient condition that implies optimality of a given
multiflow. We need some additional terminology and notation. A feasible
multiflow $F$ in $\calN$ is said to \emph{saturate} a cut $(X, \bar X)$ in~$G$
if each $S$-path $P$ with $F(P)>0$  meets $\deltain(X)\cup \deltaout(X)$ at
most once, and
$$
    \sum\nolimits_{P\colon e \in AP} F(P)  = c(e)
    \quad
    \mbox{for each arc $e\in\deltaout(X)$}.
$$

\noindent \textbf{Definition.} For an arc~$a = (u, v)$ of~$T$, define $\Pi_a$
to be the set of pairs $(s,t)$ in $S$ such that $\mu(s,t)$ ``feels'' $\ell(a)$,
i.e. $a$ belongs to a minimal directed path that starts in $T_s$ and ends in
$T_t$. (Then $\mu(s,t)$ is just the $\ell$-length of such path.) \medskip

For a multiflow $F$ and a set $\Pi \subseteq S \times S$, let $F[\Pi]$ be the
``restriction'' of $F$ relative to $\Pi$. More precisely, for an $S$-path $P$
in $G$, we define
  $$
    F[\Pi](P) := \begin{cases}
        F(P) & \mbox{if $(s_P, t_P) \in \Pi$}, \\
        0 & \mbox{otherwise}.
    \end{cases}
  $$
A cut $(X, \bar X)$ in~$G$ is called \emph{$\Pi$-separating} if $s \in X\not\ni
t$ holds for each $(s, t) \in \Pi$.

  \begin{lemma}
   \label{lm:opt_cuts}
Let $F$ be a feasible multiflow in $\calN$. Suppose that
  \begin{numitem}
  \label{eq:opt_cuts}
there exists a collection $\{(X_a, \bar X_a)\colon a \in
AT\}$ of cuts in $G$ such that for each $a \in AT$, $(X_a, \bar X_a)$ is a
$\Pi_a$-separating cut saturated by $F$.
  \end{numitem}
Then $F$ is an optimal solution to $\mu$-\textbf{MF}.
  \end{lemma}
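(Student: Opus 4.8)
The natural approach is to exhibit a matching upper bound on $\val{F,\mu}$ via LP duality: decompose the objective $\val{F,\mu}$ into a sum over arcs $a\in AT$ of contributions weighted by $\ell(a)$, and bound each contribution by the capacity of the cut $(X_a,\bar X_a)$. Concretely, the plan is first to rewrite $\mu(s,t)$ for any pair $(s,t)$ with $F$-flow as $\sum(\ell(a)\colon a\in AT,\ (s,t)\in\Pi_a)$, which is exactly the statement that a minimal directed path from $T_s$ to $T_t$ has $\ell$-length equal to the sum of the lengths of the arcs it traverses, and those are precisely the arcs $a$ with $(s,t)\in\Pi_a$. Substituting this into \refeq{mf_mu_value} and interchanging the two summations gives
  $$
    \val{F,\mu} \;=\; \sum\nolimits_{a\in AT} \ell(a)\cdot \val{F[\Pi_a]}.
  $$

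Next I would bound each term $\val{F[\Pi_a]}$ by $c(\deltaout(X_a))$. Since $(X_a,\bar X_a)$ is $\Pi_a$-separating, every $S$-path $P$ with $(s_P,t_P)\in\Pi_a$ starts in $X_a$ and ends in $\bar X_a$, hence crosses $\deltaout(X_a)$ at least once; therefore $\val{F[\Pi_a]}=\sum_P F[\Pi_a](P)\le \sum_{e\in\deltaout(X_a)}\sum_{P\colon e\in AP}F(P)\le \sum_{e\in\deltaout(X_a)}c(e)=c(\deltaout(X_a))$, using feasibility of $F$. This yields the universal upper bound $\val{F,\mu}\le \sum_{a\in AT}\ell(a)\,c(\deltaout(X_a))$ valid for \emph{any} feasible multiflow, once a single family of cuts as in \refeq{opt_cuts} is fixed.

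Finally, for the particular $F$ in the hypothesis I would show the bound is attained, so $F$ is optimal. Here I use that $(X_a,\bar X_a)$ is \emph{saturated} by $F$: every $S$-path $P$ with $F(P)>0$ meets $\deltain(X_a)\cup\deltaout(X_a)$ at most once, and each arc of $\deltaout(X_a)$ is filled to capacity. The at-most-once condition means that a flow-carrying path $P$ crosses from $X_a$ to $\bar X_a$ exactly once (it uses no arc of $\deltaout(X_a)$ in the reverse situation and no arc of $\deltain(X_a)$ at all) precisely when $s_P\in X_a$ and $t_P\in\bar X_a$, and crosses $\deltaout(X_a)$ zero times otherwise. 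Hence $\sum_{e\in\deltaout(X_a)}\sum_{P\colon e\in AP}F(P)=\sum_{P\colon s_P\in X_a,\,t_P\in\bar X_a}F(P)$, and by the saturation equality this equals $c(\deltaout(X_a))$. It remains to check that the set $\{P\colon s_P\in X_a,\ t_P\in\bar X_a,\ F(P)>0\}$ coincides with $\{P\colon (s_P,t_P)\in\Pi_a,\ F(P)>0\}$ up to pairs contributing $\ell(a)=0$; the inclusion $\Pi_a$-flow $\subseteq$ crossing-flow is immediate from $\Pi_a$-separation, and for the reverse I argue that any flow-carrying $P$ crossing $\deltaout(X_a)$ with $(s_P,t_P)\notin\Pi_a$ can be discarded because such a pair either contributes $0$ to the objective through $a$ or leads to a contradiction with minimality; this lets me conclude $\val{F[\Pi_a]}=c(\deltaout(X_a))$ for every $a$ with $\ell(a)>0$, so the chain of inequalities is tight and $\val{F,\mu}=\sum_a \ell(a)c(\deltaout(X_a))$, matching the upper bound.

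\textbf{Main obstacle.} The routine part is the double-counting identity and the feasibility estimate. The delicate point is the last reconciliation step: the saturation condition controls the \emph{geometric} crossing set $\{P\colon s_P\in X_a,\ t_P\in\bar X_a\}$, whereas the objective decomposition is indexed by the \emph{combinatorial} set $\Pi_a$, and these need not be literally equal — a flow-carrying path may cross $\deltaout(X_a)$ without $a$ lying on a minimal $T_{s_P}$-to-$T_{t_P}$ path. I expect the resolution to hinge on choosing the argument so that only the inequality $\val{F[\Pi_a]}\le c(\deltaout(X_a))$ together with the \emph{opposite} global inequality $\val{F,\mu}\ge\sum_a\ell(a)c(\deltaout(X_a))$ is needed; the latter follows because $\sum_a\ell(a)\sum_{P\colon s_P\in X_a,t_P\in\bar X_a}F(P)=\sum_P F(P)\sum_{a\colon s_P\in X_a,t_P\in\bar X_a}\ell(a)\le\sum_P F(P)\,\mu(s_P,t_P)=\val{F,\mu}$, using that the total $\ell$-length of arcs separating $s_P$ from $t_P$ in the relevant sense is at most $\mu(s_P,t_P)=d(T_{s_P},T_{t_P})$. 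Squeezing $\val{F,\mu}$ between the two bounds, which coincide, finishes the proof without ever needing the two path-sets to match exactly.
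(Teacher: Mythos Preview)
Your decomposition $\val{F,\mu}=\sum_{a\in AT}\ell(a)\,\val{F[\Pi_a]}$ and the feasibility bound $\val{F[\Pi_a]}\le c(\deltaout(X_a))$ are exactly the paper's argument, and you are right that the only nontrivial step is showing this bound is attained by the given $F$: saturation directly controls the \emph{crossing} flow $\sum_{P:\,s_P\in X_a,\,t_P\in\bar X_a}F(P)$, not $\val{F[\Pi_a]}$, and the two need not coincide.

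Where your proposal fails is the squeeze in the ``Main obstacle'' paragraph. The inequality you need at the end,
\[
\sum_{a:\,s_P\in X_a,\,t_P\in\bar X_a}\ell(a)\ \le\ \mu(s_P,t_P),
\]
goes the \emph{wrong way}. Indeed, since every $(X_a,\bar X_a)$ is $\Pi_a$-separating, for any pair $(s,t)$ one has the inclusion $\{a:(s,t)\in\Pi_a\}\subseteq\{a:s\in X_a,\ t\in\bar X_a\}$, and hence
\[
\mu(s,t)=\sum_{a:(s,t)\in\Pi_a}\ell(a)\ \le\ \sum_{a:\,s\in X_a,\,t\in\bar X_a}\ell(a),
\]
the opposite of what you assert. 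Consequently your computation yields $\sum_a\ell(a)\,c(\deltaout(X_a))\ge\val{F,\mu}$ again, and no squeeze results. A concrete witness: let $T$ be a single edge $v_1v_2$ with $\ell(v_1,v_2)=1$, $\ell(v_2,v_1)=0$; take terminals $s,t,u$ with $T_s=\{v_1\}$, $T_t=\{v_2\}$, $T_u=T$; let $G$ have arcs $(s,u)$ and $(u,t)$ of capacity~$1$, and let $F$ send one unit on each of the two one-arc $S$-paths. Then $X_a=\{s\}$ is $\Pi_a$-separating and saturated by $F$, yet the crossing path $s\to u$ has $\mu(s,u)=0<1=\ell(a)$, so your inequality fails.

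The paper's own proof does not attempt a squeeze; it simply asserts that saturation turns \refeq{opt_cuts2} into an equality. In every application within the paper the multiflow $F$ is built so that each flow-carrying path crossing $\deltaout(X_a)$ already has its source--sink pair in $\Pi_a$ (equivalently, $F[\Pi_a]$ itself saturates $(X_a,\bar X_a)$), which is what actually makes \refeq{opt_cuts2} tight. If you want a self-contained argument at the lemma's stated level of generality, the correct repair is to strengthen the hypothesis in that direction rather than to reverse an inequality that does not reverse.
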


  \begin{proof}
For $s, t \in S$, let $f(s,t)$ denote the sum of flows (by $F$) over the paths
from $s$ to $t$ in $G$. Then
    $$
        \val{F,\mu} = \sum\nolimits_{(s,t) \in S\times S} f(s,t) \mu(s,t).
    $$
Also
    $$
        \mu(s,t) = \sum\nolimits_{a \in AT\colon (s,t) \in \Pi_a} \ell(a).
    $$
It follows that
    \begin{equation}
    \label{eq:opt_cuts1}
\val{F,\mu} = \sum\nolimits_{a \in AT} \ell(a) \left(\sum\nolimits_{(s,t) \in
\Pi_a} f(s,t)\right).
    \end{equation}
Consider an arc $a \in AT$. Since $(X_a, \bar X_a)$ is a $\Pi_a$-separating
cut, we have
    \begin{equation}
    \label{eq:opt_cuts2}
\sum\nolimits_{(s,t) \in \Pi_a} f(s,t) \le c(\deltaout(X_a)),
    \end{equation}
Then \refeq{opt_cuts1} and \refeq{opt_cuts2} give
    \begin{equation}
    \label{eq:opt_cuts3}
        \val{F,\mu} \le \sum\nolimits_{a \in AT} \ell(a) \, c(\deltaout(X_a)).
    \end{equation}
Since each cut $(X_a,\bar X_a)$ is saturated by $F$, inequality
\refeq{opt_cuts2} turns into equality, and so does \refeq{opt_cuts3}. Thus,
$\val{F,\mu}$ is maximum, and the lemma follows.
 \end{proof}

Given a problem instance $(\calN, \calR)$, our algorithm will construct an
integer multiflow $F$ that possesses property~\refeq{opt_cuts}, and therefore
$F$ is optimal by \reflm{opt_cuts}. Note that~\refeq{opt_cuts} does not involve
the lengths $\ell$ of arcs in $T$, so $F$ is optimal simultaneously for all
distances $\mu$ induced by arbitrary $\ell$ (when $T$ and $\{T_s\}$ are fixed).

\subsection{Partitioning step}
\label{subsec:partition}

The core of the algorithm consists in the following recursive procedure that
divides the current instance $(\calN, \calR)$ into a pair of smaller ones.

Suppose that $T$ contains an edge $e=v_1v_2$ such that neither $v_1$ nor $v_2$
is a leaf. Let $a:=(v_1,v_2)$. Deletion of $e$ splits $T$ into subtrees $T_1$
and $T_2$ with $v_1 \in VT_1$ and $v_2 \in VT_2$. Define $S_1$ (resp. $S_2$) to
be the set of terminals $s \in S$ such that $T_s$ is entirely contained in
$VT_1$ (resp. in~$VT_2$). Then $S_1 \cap S_2 = \emptyset$ and each terminal in
$S - (S_1 \cup S_2)$ is complex (by properties~(C1),(C3)). Hence $\calN$ is
Eulerian at each vertex in $VG - (S_1 \cup S_2)$. Also from the definition of
$\Pi_a$ it follows that
   $$
    \Pi_a = S_1 \times S_2\quad \mbox{and} \quad \Pi_{\bar a} = S_2 \times S_1.
  $$
Compute an $(S_1,S_2)$-cut $(X_1, X_2)$ of minimum capacity $c(\deltaout(X_1))$
in $G$. Then $(X_1,X_2)$ is $\Pi_a$-separating and $(X_2,X_1)$ is $\Pi_{\bar
a}$-separating. The Eulerianess implies
 $$
    c(\deltaout(X_1)) - c(\deltain(X_1)) =
    \sum\nolimits_{s \in S_1} c(\deltaout(s)) - c(\deltain(s)).
$$
Hence the capacity $c(\deltaout(X_2))=c(\deltain(X_1))$ is minimum among all
$(S_2,S_1)$-cuts in $G$ as well.

We construct two new instances $(\calN_1,\mu_1)$ and $(\calN_2,\mu_2)$ in a
natural way. More precisely, set $\calN_1 := (G_1, S_1', c_1)$, where $G_1$ is
obtained from $G$ by contracting $X_2$ into a new vertex~$z_2$ (and deleting
the loops if appeared), $c_1$ is the restriction of $c$ to the arc set of
$G_1$, and $S_1' := (S \cap X_1) \cup \set{z_2}$. The distance $\mu_1$ is
induced by the tree $T'_1$ obtained from $T$ by contracting the subtree $T_2$
into $v_2$; the arc lengths in $T_1'$ are same as in $T$. (In fact, these
lengths are ignored by the algorithm and they are needed only for our
analysis.) Terminals $s \in S \cap X_1$ are now realized by the subtrees of
$T_1'$ obtained by restricting the subtrees $T_s$ in $\calR$ to $T_1'$. The
terminal $z_2$ is realized by $\set{v_2}$. Let $\calR_1$ denote the
resulting realization of $\mu_1$.

The construction of $\calN_2= (G_2, S_2', c_2),\,\mu_2,\,\calR_2$ is symmetric
(by swapping $1\leftrightarrow 2$).

The algorithm recursively constructs integer optimal multiflows $F_1$ and $F_2$
for $(\calN_1, \calR_1)$ and $(\calN_2, \calR_2)$, respectively. The following
property easily follows from the minimality of $(X_1,X_2)$ and $(X_2,X_1)$:
  \begin{numitem} \label{eq:rec_sat}
the multiflow $F_1$ saturates the cuts $\deltain_{G_1}(z_2)$ and
$\deltaout_{G_1}(z_2)$; similarly, $F_2$ saturates $\deltain_{G_2}(z_1)$ and
$\deltaout_{G_2}(z_1) $.
  \end{numitem}

This property enables us to ``glue'' (or ``aggregate'') $F_1$ and $F_2$ into an
integer multiflow $F$ in $\calN$ which saturates both cuts $(X_1, X_2)$ and
$(X_2, X_1)$ (being ${\Pi_a}$-separating and ${\Pi_{\bar a}}$\,-separating
cuts, respectively). These cuts together with the preimages in $G$ of
corresponding saturated cuts for $F_1$ and $F_2$ give a collection of saturated
cuts for $F$ as required in~\refeq{opt_cuts}, yielding the optimality of $F$ by
\reflm{opt_cuts}.

The above partitioning step reduces the current problem instance to a pair of
smaller ones (in particular, the tree sizes strictly decrease). One easily
checks that conditions (C1)--(C5) (see Section~\ref{ssec:reduct}) are
maintained. Note that for $i\in\{1,2\}$, if $s$ is a complex terminal in $S$
such that the image of $T_s$ in $T'_i$ is different from $\{v_{3-i}\}$, then we
should keep regarding $s$ as a complex terminal in $\calN_i$ (even if this
image is a nontrivial (undirected) path having zero $\ell$-length in one
direction). This is not confusing since the network continues to be Eulerian at
$s$.

The recursion process with a current $T$ stops when each edge in it is incident
to a leaf. Since each inner vertex of $T$ has degree~3 (by~(C4)), only two
cases of $T$ are possible:
  \begin{enumerate}
    \item[\rm (i)] $VT$ consists of two vertices $v_1$ and $v_2$;
    \item[\rm (ii)] $VT$ consists of one inner vertex $v_0$ and three leaves $v_1, v_2, v_3$.
\end{enumerate}

Case~(i) is considered in \refsubsec{two_leaves}, and case~(ii) in
\refsubsec{three_leaves}.

\subsection{Basic step: two vertices}
\label{subsec:two_leaves}

Let $e = v_1v_2$ be the only edge of $T$. Note that the vertices $v_1$ and
$v_2$ may correspond to many terminals in $S$. Let terminals $s_1, \ldots, s_p$
(resp. $t_1, \ldots, t_q$) be realized in  $\calR$ by $\set{v_1}$ (resp.
$\set{v_2}$). Also there may exist a terminals $s$ realized by the whole
tree~$T$; but such an $s$ may be ignored since $\mu(s, t)=\mu(t,s) = 0$ for any
$t \in S$.

Let $S' := \set{s_1, \ldots, s_p}$ and $T' := \set{t_1, \ldots, t_q}$.
Construct an integer maximum $S'$--$T'$ flow, i.e. a function $f \colon AG \to
\Z_+$ with $\val{f} := \sum \left( \div_f(s) : s \in S' \right)$ maximum
subject to $f(a) \le c(a)$ for each $a \in AG$ and $\div_f(v) = 0$ for each $v
\in VG - (S' \cup T')$. Here $\div_f(v)$ denotes the divergence
$f(\deltaout(v)) - f(\deltain(v))$. Then $f$ saturates some $(S',T')$-cut
$(X,\bar X)$. Since capacities~$c$ are Eulerian at all inner vertices, $g := c
- f$ is a $T'$--$S'$ flow. This implies that $\val{g} = c(\bar X,X)$ and that
$g$ saturates the reversed cut $(\bar X, X)$.

We construct $F$ by combining path decompositions of $f$ and $g$. Let
$a:=(v_1,v_2)$. Then $\Pi_a = S' \times T'$, and $(X, \bar X)$ is a
$\Pi_a$-separating cut. The multiflow $F[\Pi_a]$ corresponds to $f$, and
therefore it saturates $(X, \bar X)$. Similarly, $\Pi_{\bar a} = T' \times S'$,
~$(\bar X, X)$ is a $\Pi_{\bar a\,}$-separating cut, the multiflow $F[\Pi_{\bar
a}]$ corresponds to $g$, and therefore it saturates $(\bar X, X)$. This
gives~\refeq{opt_cuts} for $F$.

\subsection{Basic step: three leaves}
\label{subsec:three_leaves}

This case is less trivial. Here $ET$ consists of three edges $e_i=v_iv_0$,
$i=1,2,3$. We denote the arc $(v_i,v_0)$ by $a_i$.

Let us call terminals $s,s'$ in the current network $\calN=(G,S,c)$
\emph{similar} if they are realized by the same subtree of $T$; clearly
$\mu(s,p)=\mu(s',p)$ and $\mu(p,s)=\mu(p,s')$ for any $p\in S$. Suppose that
there are similar \emph{simple} terminals $s,s'$. They correspond to the
singleton $\{v_i\}$ for some $i\in\{1,2,3\}$ (in view of (C3)). The fact that
$v_i$ is a leaf of $T$ provides the triangle inequality $\mu(p,s)+\mu(s,q)\ge
\mu(p,q)$ for any $p,q\in S$, and similarly for $s'$. Due to this, we can
identify $s,s'$ in $G$ into one terminal (corresponding to $\{v_i\}$) without
affecting the problem in essence.

Thus, we may assume that for each $i=1,2,3$, there is exactly one terminal,
$s_i$ say, corresponding to $\{v_i\}$. Let $S':=\{s_1,s_2,s_3\}$. Note that
each terminal $s\in S-S'$ is (regarded as) complex, and $c$ is Eulerian at $s$.
We partition $S-S'$ into subsets $S_1,S_2,S_3,S_{12},S_{13},S_{23}$, where
$S_i$ (resp. $S_{ij}$) consists of the (similar) terminals corresponding to the
subtree of $T$ induced by the edge $e_i$ (resp. by the pair $\{e_i,e_j\}$).

Suppose we ignore the terminals in $S-S'$, by considering the network $\calN'
:= (G, S', c)$. This network is inner Eulerian since $\calN$ is Eulerian within
$S-S'$. Using the algorithm from~\cite{BK-07}, we find an optimal multiflow $F$
to problem \textbf{IMF} for $\calN'$ with unit distance for each pair
$(s_i,s_j)$, $i\ne j$. Also for $i=1,2,3$, we find a minimum capacity $(s_i,\,
S' - s_i)$-cut $(X_i, \bar X_i)$ in $\calN'$. They can be chosen so that the
sets $X_1,X_2,X_3$ are pairwise disjoint. Also one may assume that each path
$P$ with $F(P)>0$ is simple and has no intermediate vertex in $S'$. Then $F$
yields a solution to $(\calN',\mu')$, where $\mu'$ is the restriction of $\mu$
to $S'\times S'$. Since $\calN'$ is inner Eulerian and in view of
\refth{free_min_max}, $F$ saturates both cuts $(X_i, \bar X_i)$ and $(\bar X_i,
X_i)$ for each $i$. Associating such cuts to the arcs $a_i,\bar a_i$ results
in~\refeq{opt_cuts}. Then $F$ is optimal by~\reflm{opt_cuts}.

Next we return to $\calN$ as before. The above multiflow $F$ need not be
optimal for $(\calN,\mu)$ since cuts $(X_i, \bar X_i)$ may not be
$\Pi_{a_i}$-separating for some $i$. Our aim is to improve $F,X_1, X_2, X_3$ so
as to ensure~\refeq{opt_cuts}.

More precisely, we are looking for subsets $X_i' \subseteq X_i$, $i = 1, 2, 3$,
and a  multiflow $F'$ such that:
\begin{numitem}
\label{eq:3opt}
    \begin{enumerate}
        \item[(i)]
        $S_i \subset VG - (X_j \cup X_k)$ and $S_{ij} \subset VG - X_k$ for
        any distinct $i,j,k$;
        \item[(ii)]
        for $i = 1, 2, 3$, the cuts $(X_i',\bar X_i')$ and $(\bar X_i',X_i')$
        are saturated by~$F'$;
        \item[(iii)]
each path $P$ with $F'(P)>0$ connects either $s_i$ and $s_j$, or $s_i$ and
$S_j$, or $s_i$ and $S_{jk}$, where $i,j,k$ are distinct.
    \end{enumerate}
 \end{numitem}
By~\refeq{3opt}(i), the cut $(X_i',\bar X_i')$ is $\Pi_{a_i}$-separating, $i =
1, 2, 3$. In their turn \refeq{3opt}(ii),(iii) imply that $F'[\Pi_{a_i}]$
saturates $(X_i',\bar X_i')$. Then $F'$ is optimal by \reflm{opt_cuts}.

We construct the desired $X'_i$ and $F'$ as follows. For $i = 1, 2, 3$, let
$Q_i$ denote the set of terminals $s$ that violate~\refeq{3opt}(i) w.r.t.
$X_i$, i.e. $s\in X_i$ but $s\notin \{s_i\}\cup S_i\cup S_{ij}\cup S_{ik}$
(where $\{i,j,k\}=\{1,2,3\}$). If $Q_i = \emptyset$ then $(X_i,\bar X_i)$ is
already $\Pi_{a_i}$-separating, in which case we set $X_i' := X_i$.

Let $Q_i\ne\emptyset$. We construct the digraph $G_i$ from $G$ by contracting
$VG - X_i$ into a new terminal $z_i$. Arc capacities in $G_i$ are induced by
those in $G$ (and are denoted by~$c$ as before). This gives the network
$\calN_i: = (G_i, \set{s_i,z_i} \cup Q_i, c)$ which is Eulerian at all vertices
except, possibly, for $s_i$ and $z_i$. The current multiflow $F$ in $G$ induces
a multiflow $F_i$ in $G_i$ consisting of weighted $s_i$--$z_i$ and $z_i$--$s_i$
paths. Since $F$ saturates $\deltain_G(X_i)$ and $\deltaout_G(X_i)$, the
multiflow $F_i$ saturates $\deltain_{G_i}(z_i)$ and $\deltaout_{G_i}(z_i)$.

Now we find in $G_i$ a maximum integer flow $g_i$ from the source $s_i$ to the
set of sinks $Q_i\cup z_i$. Moreover, among such flows we choose one maximizing
$-\div_{g_i}(z_i)$. (This is done by standard flow techniques: take a maximum
$s_i$--$z_i$ flow (e.g. by extracting the subflow in $F_i$ formed by
$s_i$--$z_i$ paths), then switch to the residual network and augment the
current flow to get a maximum $s_i$--$(Q_i\cup z_i)$ flow.) The flow $g_i$ is
decomposed into a collection of weighted $s_i$--$z_i$ paths, denoted by $\hat
g_i(s_i,z_i)$, a collection of weighted $s_i$--$t$ paths for $t \in Q_i$,
denoted by $\hat g_i(s_i,t)$, ignoring possible cycles. By the construction,
$g_i$ saturates the trivial cut $\deltain_{G_i}(z_i)$ and some $(s_i,Q_i\cup
z_i)$-cut $\deltaout_{G_i}(X_i')$.

It remains to construct flows on paths going in the opposite direction, i.e.
entering $s_i$. Define the function $h_i := c - g_i$ on $AG_i$. It is Eulerian
at all vertices in $VG_i - \left(\set{s_i,z_i} \cup Q_i\right)$. Also
$\div_{h_i}(z_i) \ge 0$ (since $g_i$ saturates $\deltain_{G_i}(z_i)$) and
$\div_{h_i}(t) \ge 0$ for all $t \in Q_i$ (since $c$ is Eulerian at $t$). We
decompose $h_i$ into a collection of weighted $z_i$--$s_i$ paths, denoted by
$\hat h_i(z_i,s_i)$), and a collection of weighted $t$--$s_i$ paths for $t \in
Q_i$, denoted by $\hat h_i(t,s_i)$, ignoring possible cycles. These paths
saturate $\deltain_{G_i}(X_i')$ (since $h_i(\deltaout_{G_i}(X_i')) = 0$) and
$\deltaout_{G_i}(z_i)$ (since $h_i(\deltain_{G_i}(z)) = 0$).

The collections $\hat g_i(\cdot)$ and $\hat h_i(\cdot)$ constitute a multiflow
$F'_i$ that replaces the ``restriction'' $F_i$ of $F$ on $G_i$. Making such
``replacements'' for $i=1,2,3$ (and using the fact that $X_1,X_2,X_3$ are
disjoint), we obtain an integer multiflow $F'$ which along with
$X'_1,X'_2,X'_3$ as above satisfies~\refeq{3opt}. Hence $F'$ is optimal.

\subsection{Complexity}

In this final section we describe an efficient implementation of our algorithm
and estimate its complexity. Current multiflows in the process are stored as
collections of point-to-point flows. Namely, an integer multiflow $F$ in a
network with terminals $S$ is maintained as a collection $\setst{f_{st}}{s, t
\in S, s \ne t}$, where $f_{st}$ is an integer $s$--$t$ flow (called an
\emph{$s$--$t$ component} of $F$).

Let $\phi(n,m,k)$ denote the complexity of the algorithm applied to an instance
with $n$ vertices and $m$ arcs of $G$, and $k$ leaves of~$T$.

The case $k = 2$ was studied in \refsubsec{two_leaves}. The algorithm involves
a single max-flow computation and two flow decompositions. Hence
\begin{equation}
\label{eq:complexity_2}
    \phi(n,m,2) = O(MF(n,m) + mn),
\end{equation}
where $MF(n',m')$ denotes the complexity of a max-flow algorithm in a network
with $n'$ vertices and $m'$ arcs.

The case $k = 3$ was considered in \refsubsec{three_leaves}. It reduces to
solving a three-terminal version of the unweighted directed \textbf{IMF}
problem followed by $O(1)$ max-flow computations and decompositions. With the
help of the algorithm from \cite{BK-07} the three-terminal multiflow problem is
solved in $O(MF(n,m) + mn \log (n^2/m))$ time. Therefore,
 \begin{equation}
\label{eq:complexity_3}
    \phi(n,m,3) = O(MF(n,m) + mn \log (n^2/m)).
\end{equation}

For $k \ge 4$, we apply the partitioning operation from \refsubsec{partition}.
Computing a minimum cut dividing the current instance $\calN$ into $\calN_1$
and $\calN_2$ takes $O(MF(n,m))$ time. The aggregation takes the $s$--$z_2$
components of $F_1$ (for $s \in S \cap X_1$) and the $z_1$--$s$ components of
$F_2$ (for $s \in S \cap X_2$), combines them into an $(S \cap X_1)$--$(S \cap
X_2)$ flow and decomposes it into a collection of flows for all source-sink
pairs. The algorithm similarly handles the $z_2$--$s$ components of $F_1$ (for
$s \in S \cap X_1$) and the $s$--$z_1$ components of $F_2$ (for $s \in S \cap
X_2$). Finally it adds remaining components of $F_1$ and $F_2$, thus forming an
integer optimal multiflow in $\calN$. In total the aggregation operations take
$O(mn)$ time, hence
 \begin{equation}
\label{eq:complexity_partition}
    \phi(n,m,k) = \phi(n_1,m_1,k_1) + \phi(n_2,m_2,k_2) + O(MF(n,m) + mn),
 \end{equation}
where $(n_i,m_i,k_i)$ are the size parameters for $\calN_i$.

Since degrees of inner nodes of $T$ are 3 by (C3), there exists (and can be
found in $O(k)$ time) a partitioning edge in $T$ that yields $k_1, k_2 \le 2k/3
+ 1$. Thus, the height of the recursion tree is at most $O(\log S)$. Also $n_1
+ n_2 = n + 2$ and $m_1, m_2 \le m$. Assuming that $MF(n,m)$ obeys some
technical conditions (e.g., satisfying $MF(n,m) = O(mn \log (n^2/m))$ as in the
algorithm of Goldberg and Tarjan \cite{GT-88}), one can show by induction that
\refeq{complexity_2}, \refeq{complexity_3}, and \refeq{complexity_partition}
imply
 $$
    \phi(n,m,k) = O((MF(n,m) + mn) \log k + mn \log(n^2/m))
 $$
(applying reasonings similar to those in~\cite{IKN}). By spending additional
$O(mn \log \abs{S})$ time, one can convert the resulting integer optimal
multiflow into path-packing form, as exlained in \cite{BK-07}. In total, the
algorithm takes $O((MF(n,m) + mn) \log \abs{S} + mn \log (n^2/m))$ time, as
declared.

\end{document}